\newtheorem{theorem}{Theorem}[section]
\newtheorem{proposition}[theorem]{Proposition}
\begin{document}
\title{$\omega$-Euclidean domain and skew Laurent series rings}
\author{Oleh Romaniv and Andrij Sagan \\
\\\vspace{6pt} Department of Mechanics and Mathematics,
\\ Ivan Franko National University of Lviv, Ukraine}

\maketitle

\begin{abstract}
In this paper we proved that if $R$ is right $\omega$-Euclidean domain, then skew Laurent formal series ring is right $\omega$-Euclidean domain. We also showed that if $R$ is a right $\omega$-Euclidean domain with multiplicative norm, then skew Laurent formal series ring is a right principal ideal domain. In addition, we proved that if $R$ is a noncommutative $\omega$-Euclidean domain with a multiplicative norm, then $R$ and skew Laurent formal series ring is a ring with elementary reduction of matrices.
\end{abstract}


\section{Introduction}

In the present paper, $R$ is understood as an associative domain with nonzero unit element, $\sigma$ is understood as an automorphism of $R$ and $\mathcal{U}(R)$ is understood as the group of invertible elements of a ring $R$. Let $\varphi: R\rightarrow\mathbb{N}\cup\{0\}$ be a function satisfying the following condition: $\varphi(a)=0$ if and only if $a=0$; $ \varphi(a)>0$ for any nonzero and $\varphi(ab)\geq \varphi(a)$ for any arbitrary elements $a,b\in R$. This function is called the \textit{norm} over domain $R$.

Domain $R$ is called \textit{a right Euclidean} if for any arbitrary elements $a,b\in R$ with $b\neq 0$, there exist $q,r\in R$ such that $a=bq+r$ and $\varphi(r)<\varphi(b)$.

A \textit{right $k$-stage division chain} \cite{Zab} for any arbitrary elements $a,b\in R$ with $b\neq 0$ is understood as the sequence of equalities
\begin{equation}\label{2.1}
  a=bq_{1}+r_{1}, \\
  b=r_{1}q_{2}+r_{2},\\
  \ldots\ldots\ldots, \\
  r_{k-2}=r_{k-1}q_{k}+r_{k},
\end{equation}
with $k\in \mathbb{N}$.

Domain $R$ is called \textit{a right $\omega$-Euclidean domain}\cite{Zab} with respect to the norm $\varphi$, if for any arbitrary elements $a,b\in R$, $b\neq 0$, there exists a right $k$-stage division chain $\eqref{2.1}$ for some $k$, such as $\varphi(r_{k})<\varphi(b)$.

Clearly, the right Euclidean domain is a right $\omega$-Euclidean domain.

A ring $R$ is called \textit{a ring with elementary reduction of
matrices}\cite{Zab3} in case when an arbitrary matrix over $R$
possesses elementary reduction, i.e. for an arbitrary matrix $A$
over the ring $R$  there exist such elementary matrices over $R$ ,$\;
P_{1},\ldots, P_{k}, Q_{1},\ldots, Q_{s}$ of respectful size such that
\begin{equation*}\label{eq_1}
P_{1}\cdots P_{k}\cdot A\cdot Q_{1}\cdots Q_{s} = diag(\varepsilon_{1},\ldots, \varepsilon_{r}, 0,\ldots, 0),
\end{equation*}
where $R\varepsilon_{i+1}R\subseteq R\varepsilon_{i}\cap \varepsilon_{i}R$ for any $i= 1,\ldots, r-1$.

Let $R_{X}=R[[x,x^{-1};\sigma]]$ be the \textit{skew Laurent power series ring} in which each element has a unique representation as
$$
\sum\limits_{i=h}^{\infty}a_{i}x^{i},\quad h\in\mathbb{Z},\quad a_{i}\in R
$$
with $a_{-n}$ for almost all positive integer $n$ and $x^{i}\cdot a=\sigma^{i}(a)\cdot x^{i}$.

P. Samuel in \cite{Samuel} proved that if $R$ is a Euclidean domain then Laurent formal series ring is a Euclidean domain. Also K. Amano and T. Kuzumaki in \cite{Amano} proved that if $R$ is a right Euclidean domain then a skew Laurent formal series ring is a Euclidean domain. O. M. Romaniv and A. V. Sagan in \cite{RomSag5} proved that $R$ is a commutative $\omega$-Euclidean domain if and only if a Laurent formal series ring is a $\omega$-Euclidean domain. In this paper, the results  in \cite{Samuel} and \cite{RomSag5} for a skew Laurent formal series ring are generalized. In addition, elementary reduction of matrices over a skew Laurent formal series ring are investigated.

Note that if $\sigma = 1_{R}$ then a skew Laurent formal series ring is a Laurent formal series ring.

\section*{Main results}

Let $R$ be an integral domain with a norm map $\varphi: R\rightarrow \mathbb{N}\cup\{0\}$. For any arbitrary element
$$
f=\sum\limits_{i\geq h}a_{i}x^{i}\in R_{X},\quad a_{i}\in R,\; a_{h}\neq 0
$$
we put $\psi(f)=a_{h}$ and $\psi(f)=0$ if and only if $f=0$. So $\psi$ is a map from $R_{X}$ to $R$.

For any arbitrary elements $a, b\in R$ a record $b \mid a$ means that $a \equiv 0\;(mod\; b)$ otherwise $b\nmid a$.

\begin{proposition}\label{lemma1}
For any arbitrary elements $f, g\in R_{X}$ with $g\neq 0$ we have that $f=gu$ or $f=gu+v$, where $\psi(g)\nmid \psi(v)$.
\end{proposition}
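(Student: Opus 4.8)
The plan is to work with the map $\psi$ that extracts the lowest-degree coefficient of a skew Laurent series. Given $f,g\in R_X$ with $g\neq 0$, I want to either divide $f$ exactly by $g$ on the right (getting $f=gu$), or else produce a remainder $v$ whose leading coefficient $\psi(v)$ is not divisible by $\psi(g)$. Let me see how I would prove the statement below.

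=== BEGIN: MY PROOF PROPOSAL ===

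\begin{Proof}
The plan is to build the quotient $u$ coefficient by coefficient, starting from the lowest-order term, and to stop the construction precisely when the division can no longer proceed divisibly. Write
$$
f=\sum_{i\geq h_f}a_i x^i,\qquad g=\sum_{i\geq h_g}b_i x^i,
$$
with $a_{h_f}=\psi(f)\neq 0$ and $b_{h_g}=\psi(g)\neq 0$ (the case $f=0$ is trivial, taking $u=0$). The key observation is that for any $c\in R$ and any integer $m$, the product $g\cdot (c\,x^m)$ has lowest-order term $b_{h_g}\sigma^{h_g}(c)\,x^{h_g+m}$, since $x^{h_g}\cdot c=\sigma^{h_g}(c)x^{h_g}$. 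Thus right-multiplying $g$ by a monomial $c x^m$ lets me target the coefficient in degree $h_g+m$, and the coefficient I can realize there is exactly $b_{h_g}\sigma^{h_g}(c)$, i.e.\ an arbitrary right multiple of $\psi(g)$ (because $\sigma$ is an automorphism, $\sigma^{h_g}(c)$ ranges over all of $R$ as $c$ does).

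First I would attempt to match the lowest coefficient of $f$: I ask whether $\psi(g)\mid a_{h_f}$, i.e.\ whether $a_{h_f}=b_{h_g}\sigma^{h_g}(c_0)$ for some $c_0\in R$. If no such $c_0$ exists, then $f$ itself already plays the role of $v$: we have $f=g\cdot 0+f$ with $\psi(g)\nmid\psi(f)$, and we are done. If such $c_0$ exists, I set the first monomial $u_0=c_0 x^{\,h_f-h_g}$ of the quotient, form $f^{(1)}=f-g u_0$, and note that $f^{(1)}$ has order strictly greater than $h_f$ because the degree-$h_f$ terms cancel by construction. I then repeat the procedure with $f^{(1)}$ in place of $f$: look at $\psi(f^{(1)})$, test divisibility by $\psi(g)$, and either stop (producing the remainder $v=f^{(1)}$, whose leading coefficient is not a right multiple of $\psi(g)$) or peel off another monomial $u_1$ whose degree is strictly larger than that of $u_0$.

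Iterating gives two possibilities. Either the process terminates after finitely or infinitely many steps always succeeding, in which case the partial sums $u_0+u_1+\cdots$ converge in $R_X$ to a genuine Laurent series $u$ (the orders of the successive monomials strictly increase and are bounded below by $h_f-h_g$, so only finitely many land in each degree and the formal sum is well defined), yielding $f=gu$; or else at some stage $n$ the divisibility test fails, and we output $u=u_0+\cdots+u_{n-1}$ together with $v=f-gu=f^{(n)}$, which satisfies $\psi(g)\nmid\psi(v)$ by the very condition that stopped the recursion. In both cases the claimed dichotomy holds.

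The step I expect to be the main obstacle, and the one deserving the most care, is the convergence/well-definedness argument in the non-terminating case: I must check that the strictly increasing sequence of monomial orders guarantees that $u=\sum_k u_k$ is a legitimate element of $R_X$ (each coefficient of $u$ receives a contribution from at most one $u_k$, so there is no infinite summation within a single degree), and that the formal identity $f=gu$ indeed holds coefficient-by-coefficient in the limit. The twist by $\sigma$ enters only through the identity $x^{h_g}c=\sigma^{h_g}(c)x^{h_g}$, and since $\sigma$ is an automorphism the solvability of $a=b_{h_g}\sigma^{h_g}(c)$ in $c$ is equivalent to the plain right-divisibility $\psi(g)\mid a$ in $R$; keeping this equivalence explicit is what makes the stopping condition match the statement's conclusion $\psi(g)\nmid\psi(v)$.
\end{Proof}

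=== END: MY PROOF PROPOSAL ===
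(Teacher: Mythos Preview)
Your proposal is correct and follows essentially the same approach as the paper's proof: iteratively subtract from $f$ a monomial right multiple $g\cdot c_n x^{m_n}$ chosen (via the automorphism $\sigma$) to kill the current lowest-order term, stopping precisely when $\psi(g)$ fails to right-divide the leading coefficient of the running remainder, and otherwise letting the strictly increasing orders produce a convergent quotient $u$ with $f=gu$. The only differences are presentational: the paper opens each step by writing a decomposition $\psi(f)=\psi(g)q+r$ and then branching on whether $\psi(g)\mid r$, whereas you branch directly on divisibility; and you spell out the convergence argument in the non-terminating case more carefully than the paper does.
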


\begin{proof}
Let $h$ be the lowest degree of $f$ and $k$ be the lowest degree of $g$. Set $\psi(f)=\psi(g)q+r$ where $q, r\in R$.

Since $\sigma$ is an automorphism of $R$, then there exists some element $c\in R$, such that $\sigma^{k}(c)=q$. In this case, we write $v = f - gcx^{h-k}$. Then
\begin{eqnarray*}
v = (\psi(f) - \psi(g)q)x^{h}\; +\; \text{higher degree terms} = \\
= rx^{h}\; +\; \text{higher degree terms},
\end{eqnarray*}
where $r\in R$.

If $\psi(g)\nmid r$, then we get the desired result $\psi(g)\nmid r = \psi(v)$ and stop the process.

If $\psi(g)\mid r$, then $\psi(v) = \psi(g)q_{1}$ and $\sigma^{k}(c_{1}) = q_{1}$ for some $c_{1}\in R$. Write
$$
v_{1}=v-gc_{1}x^{l_{1}-k},\; \text{where}\; l_{1}\; \text{is the lowest degree of}\; v.
$$
Then, we again consider two possible cases $\psi(g)\nmid \psi(v_{1})$ or $\psi(g)\mid \psi(v_{1})$ and so on. If this process is finite, then
$$
f = g(cx^{h-k}+c_{1}x^{l_{1}-k}+\cdots + c_{n}x^{l_{n}-k}) + v_{n},
$$
where $\psi(g)\nmid \psi(v_{n})$.

If this process continues infinite number of steps, we get
$$
u=сx^{h-k}+с_{1}x^{l_{1}-k}+\cdots + c_{n}x^{l_{n}-k} + \cdots,
$$
so $f=gu$.
\end{proof}

We define a map $\varphi_{x}: R_{X}\rightarrow\mathbb{N}\cup\{0\}$ by $\varphi_{x}(f)=\varphi(\psi(f))$.

Then we obtain the following result.

\begin{theorem}\label{th 1}
Let $R$ be a right $\omega$-Euclidean domain with respect to $\varphi$, then $R_{X}$ is a right $\omega$-Euclidean domain with respect to $\varphi_{x}$.
\end{theorem}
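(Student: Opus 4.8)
The plan is to reduce the statement to the right $\omega$-Euclidean property of $R$ by lifting a division chain of the leading coefficients from $R$ up to $R_X$. Given $f,g\in R_X$ with $g\neq 0$, write $\alpha=\psi(f)$ and $\beta=\psi(g)\neq 0$, and let $h$ and $k$ be the lowest degrees of $f$ and $g$. Since $R$ is right $\omega$-Euclidean with respect to $\varphi$, there is a right $k$-stage division chain in $R$ for the pair $\alpha,\beta$, say $\alpha=\beta q_1'+r_1'$, $\beta=r_1'q_2'+r_2',\dots, r_{k-2}'=r_{k-1}'q_k'+r_k'$, terminating with $\varphi(r_k')<\varphi(\beta)=\varphi_x(g)$ and with all divisors $\beta,r_1',\dots,r_{k-1}'$ nonzero. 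The goal is to manufacture a right division chain in $R_X$ whose successive remainders $r_1,r_2,\dots$ satisfy $\psi(r_i)=r_i'$, so that $\varphi_x(r_k)=\varphi(r_k')<\varphi_x(g)$, which is exactly what the definition of a right $\omega$-Euclidean domain demands.

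First I would lift a single division step, using the computation in the proof of Proposition~\ref{lemma1} as the model. To realise the $R$-step $\alpha=\beta q_1'+r_1'$ inside $R_X$, I use that $\sigma$ is an automorphism to choose $c_1\in R$ with $\sigma^{k}(c_1)=q_1'$, and set $q_1=c_1x^{h-k}$, $r_1=f-gq_1$. The leading term of $gq_1$ is $\beta\sigma^{k}(c_1)x^{h}=\beta q_1'x^{h}$, and since $g$ has lowest degree $k$ no term of $gq_1$ reaches a degree below $h$; hence the degree-$h$ coefficient of $r_1$ is $\alpha-\beta q_1'=r_1'$, so $\psi(r_1)=r_1'$ and the lowest degree of $r_1$ equals $h$ whenever $r_1'\neq 0$. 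Iterating, at the $i$-th stage I divide $r_{i-2}$ by $r_{i-1}$ (with $r_0=g$, $r_{-1}=f$) using a monomial quotient $q_i=c_ix^{m_i}$, where the $\sigma$-exponent is fixed by $\sigma^{d}(c_i)=q_i'$ with $d=\mathrm{ldeg}(r_{i-1})$ and $m_i$ is chosen to align lowest degrees. Because the lowest degrees then simply alternate between $h$ and $k$, each $q_i$ is well defined and an induction gives $\psi(r_i)=r_i'$ for $i=1,\dots,k-1$, so the leading coefficients of the $R_X$-chain track the $R$-chain step by step.

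The hard part will be the bookkeeping of lowest degrees under the $\sigma$-twist together with the exactly-divisible cases. When some $r_i'=0$ the single monomial subtraction only cancels the leading coefficient of $r_{i-2}$ and leaves the higher-degree terms uncontrolled, so $\psi(r_i)$ need no longer equal $r_i'$. At such a step I would instead invoke Proposition~\ref{lemma1} directly: it returns either $r_{i-2}=r_{i-1}u$, in which case the remainder is $0$ and $\varphi_x(0)=0<\varphi_x(g)$ terminates the chain at once, or $r_{i-2}=r_{i-1}u+v$ with $\psi(r_{i-1})\nmid\psi(v)$, and the clearing of successive leading coefficients in that proof is what reconciles this with the $R$-chain. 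Once the final stage is reached with $\psi(r_k)=r_k'$ (or with remainder $0$), the equality $\varphi_x(r_k)=\varphi(\psi(r_k))=\varphi(r_k')<\varphi(\beta)=\varphi_x(g)$ shows that the constructed sequence of equalities is the required right $k$-stage division chain in $R_X$, completing the proof.
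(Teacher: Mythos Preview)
Your strategy coincides with the paper's: lift a right $k$-stage division chain for the leading coefficients from $R$ to $R_X$ by using monomial quotients $c_ix^{m_i}$, with $c_i$ chosen via $\sigma$ so that $\psi$ of each $R_X$-remainder equals the corresponding $R$-remainder, and then read off $\varphi_x(r_k)=\varphi(r_k')<\varphi(\beta)=\varphi_x(g)$. The only organisational difference is that the paper first applies Proposition~\ref{lemma1} to the pair $f,g$ (replacing $f$ by some $v$ with $\psi(g)\nmid\psi(v)$) and only then runs the $R$-chain on $\psi(v),\psi(g)$, whereas you run the $R$-chain immediately on $\psi(f),\psi(g)$ and defer Proposition~\ref{lemma1} to the final step. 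That reordering is inessential.

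The gap is in your handling of the terminal case $r_k'=0$. You rightly note that a single monomial subtraction then only cancels the lowest coefficient of $r_{k-2}$, so the resulting $r_k$ need not vanish, and you propose to apply Proposition~\ref{lemma1} to $r_{k-2},r_{k-1}$ instead. In the branch $r_{k-2}=r_{k-1}u$ this indeed finishes with remainder $0$. But in the other branch Proposition~\ref{lemma1} returns $r_{k-2}=r_{k-1}u+v$ with $\psi(r_{k-1})\nmid\psi(v)$, and nothing you have written bounds $\varphi_x(v)$ below $\varphi_x(g)$: the sentence ``the clearing of successive leading coefficients in that proof is what reconciles this with the $R$-chain'' is not an argument, because the $R$-chain has already terminated (its last remainder is $r_k'=0$) and $\psi(v)$ is a new element of $R$ unrelated to any $r_i'$. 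At this point the $R_X$-chain must still be extended until some remainder has $\varphi_x$-norm below $\varphi_x(g)$, and you have not said how. The paper's own treatment of this case is equally brief---it asserts $v_k=0$ when $r_k=0$ and appeals to a result of Cooke---so you are not missing an idea that the paper explicitly supplies; but as written your final paragraph does not close the argument.
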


\begin{proof}
Consider any arbitrary two elements $f, g\in R_{X}$ where $g\neq 0$,
\begin{gather*}
f = \psi(f)x^{h} + \text{higher degree terms,} \\
g=\psi(g)x^{k} + \text{higher degree terms.}
\end{gather*}
Then by Proposition \ref{lemma1} we have either following equality $f=gu$ or another equality $f=gu+v$ where $\psi(g)\nmid \psi(v)$.

Obviously, in the first case, $R_{X}$ is a right Euclidean domain, and, therefore, a right $\omega$-Euclidean domain.

In the second case, consider two possible cases delete under:

\item[(1)] $\varphi_{x}(v)<\varphi_{x}(g)$. By the definition, $R_{X}$ is a right Euclidean domain and, hence, $R_{X}$ is a right $\omega$-Euclidean domain.

\item[(2)] $\varphi_{x}(v)\geqslant\varphi_{x}(g)$. Then there exists a right $k$-stage division chain
$$
\psi(v) = \psi(g)q_{1}+r_{1},\; \psi(g) = r_{1}q_{2}+r_{2},\; \ldots, \; r_{k-2} = r_{k-1}q_{k} + r_{k},
$$
where $\varphi(r_{k})<\varphi(\psi(g))$ (because $R$ is a right $\omega$-Euclidean domain).

Then there exists some element $c_{1}\in R$, such that $\sigma^{k}(c_{1}) = q_{1}$, where $q_{1}\in R$. Now we can write
$$
v - gc_{1}x^{l_{1}-k} = v_{1} \quad \text{(where $l_{1}$ --- order $v$)},
$$
hence we have $f = g(u - с_{1}x^{l_{1}-k}) + v_{1}$ and $\psi(v_{1}) = r_{1}$.

Continuing this process, we find that there exists some element $c_{2}\in R$ such that $\sigma^{l_{2}}(c_{2}) = q_{2}$, where $q_{2}\in R$.
Write
$$
g - v_{1}c_{2}x^{k-l_{2}} = v_{2} \quad \text{(where $l_{2}$ --- order $v_{1}$)},
$$
hence we have $g = v_{1}c_{2}x^{k-l_{2}} + v_{2}$ and $\psi(v_{2}) = r_{2}$.

This process will continue until to some $k\in \mathbb{N}$: then there exists some $c_{k}\in R$, such that $\sigma^{l_{k}}(c_{k}) = q_{k}$, where $q_{k}\in R$, and write
$$
v_{k-2} - v_{k-1}c_{k}x^{l_{k-1}-l_{k}} = v_{k} \quad \text{(where $l_{k}$ --- order $v_{k-1}$)}.
$$
From here $v_{k-2} = v_{k-1}c_{k}x^{l_{k-1}-l_{k}} + v_{k}$ and $\psi(v_{k}) = r_{k}$.

Summing up the above, we have a right $k$-stage division chain
\begin{equation}\label{3.1}
f = g(u - q_{1}x^{l_{1}-k}) + v_{1}, g = v_{1}c_{2}x^{k-l_{2}} + v_{2}, \ldots, v_{k-2} = v_{k-1}c_{k}x^{l_{k-1}-l_{k}} + v_{k}.
\end{equation}

If $r_{k}\neq 0$, then we have
$$
\varphi_{x}(g) = \varphi(\psi(g))> \varphi(r_{k}) = \varphi_{x}(v_{k}).
$$
and, therefore $R_{X}$ is a right $\omega$-Euclidean domain.

If $r_{k} = 0$, then obviously
$$
\varphi_{x}(g) = \varphi(\psi(g))> \varphi(r_{k}) = \varphi_{x}(v_{k}) = 0.
$$
Note that by the properties of the norm we have $v_{k}=0$ and, therefore, the chain \eqref{3.1} is finite, and, therefore, by Proposition 1 \cite{Cooke}, $R_{X}$ is a right $\omega$-Euclidean domain.
\end{proof}

Clearly, for a left $\omega$-Euclidean domains, the left-side version of Proposition\ref{lemma1} is also valid.

\begin{theorem}\label{th2}
Let $R$ be a left principal ideal domain, a right $\omega$-Euclidean domain. Then
\begin{enumerate}
  \item[(1)] $R$ is a ring with elementary reduction of matrices;
  \item[(2)] $R_{X}$ is a ring with elementary reduction of matrices.
\end{enumerate}
\end{theorem}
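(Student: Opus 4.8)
The plan is to reduce the whole problem to a single two-element reduction step powered by the right $\omega$-Euclidean structure, and to use the left principal ideal hypothesis to control the complementary (left) side together with the divisibility chain on the diagonal. The basic observation is that a right $k$-stage division chain for a pair $a,b$ translates verbatim into a sequence of elementary \emph{column} operations: the step $a=bq_{1}+r_{1}$ is the operation subtracting the second column, right-multiplied by $q_{1}$, from the first, and so on — exactly the mechanism already exploited in Proposition \ref{lemma1} and in the proof of Theorem \ref{th 1}. Iterating the chain and interchanging the two positions between rounds, the hypothesis $\varphi(r_{k})<\varphi(b)$ forces the smaller of the two norms to drop strictly each round; since $\varphi$ takes values in $\mathbb{N}\cup\{0\}$, after finitely many rounds one reaches a pair $(d,0)$. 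So I would first record the key lemma: over a right $\omega$-Euclidean domain any row $(a,b)$, and hence any $1\times n$ row, can be brought to the form $(d,0,\dots,0)$ by elementary column transformations. In particular the right ideal $aR+bR$ equals $dR$, so such a domain is right B\'ezout.

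For part (1) I would then run a Smith-type diagonalisation. Using the lemma, elementary column operations clear the first row of an arbitrary matrix $A$ to $(\delta,0,\dots,0)$. To clear the first column below $\delta$ I use that $R$ is a left principal ideal domain: the left ideal generated by the first-column entries is principal, say $Rd'$, so B\'ezout coefficients express $d'$ and cofactors express each entry through $d'$, assembling into a unimodular row transformation sending the first column to $(d',0,\dots,0)^{\top}$. Alternating the column step and this column-clearing row step, the $(1,1)$-pivot has strictly decreasing norm at each full round — the right $\omega$-Euclidean step being the one that forces the drop — so by well-ordering the process stops with a matrix whose first row and first column vanish off the pivot; I then recurse on the complementary $(n-1)\times(m-1)$ block. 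Finally the relations $R\varepsilon_{i+1}R\subseteq R\varepsilon_{i}\cap\varepsilon_{i}R$ follow from two-sided B\'ezout information: left divisibility of the later pivots by the earlier ones comes from the left principal ideal hypothesis, and right divisibility from the fact that $R$ is right B\'ezout.

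The step I expect to be the genuine obstacle is making the first-column clearing \emph{elementary} rather than merely unimodular: the left principal ideal hypothesis produces an invertible row transformation, but elementary reduction demands a product of transvections. I would resolve this by feeding the left B\'ezout data back through right division chains — the right $\omega$-Euclidean algorithm applied to the relevant transposed pairs rewrites the unimodular $2\times 2$ block as a product of elementary matrices — so that the whole reduction stays inside the elementary group; alternatively one invokes the criterion for elementary reduction in \cite{Zab3}. One must also check that the row step never increases the pivot norm, so that the alternation terminates; here the monotonicity $\varphi(ab)\geq\varphi(a)$ of the norm is what guarantees descent.

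For part (2) the plan is simply to verify that $R_{X}$ satisfies the hypotheses of part (1) and then quote it. Theorem \ref{th 1} already gives that $R_{X}$ is right $\omega$-Euclidean with respect to $\varphi_{x}$, so it remains to show that $R_{X}$ is a left principal ideal domain. For this I would analyse a nonzero left ideal $I\subseteq R_{X}$ through the leading-coefficient map $\psi$: since $x$ is invertible in $R_{X}$, $I$ is stable under multiplication by $x^{\pm 1}$, and the set of leading coefficients of its elements forms a $\sigma$-invariant left ideal of $R$, hence principal because $R$ is a left principal ideal domain; lifting a generator and using invertibility of $x$ to align lowest degrees shows $I$ is principal. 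Applying part (1) to $R_{X}$ then yields that $R_{X}$ is a ring with elementary reduction of matrices. The main technical point of part (2) is precisely this transfer of the left principal ideal property from $R$ to $R_{X}$ through $\psi$ and the $\sigma$-action.
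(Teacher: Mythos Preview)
Your overall architecture coincides with the paper's: for (2) you transfer both hypotheses to $R_{X}$ and then invoke (1). The paper does exactly this, citing Theorem~6.3 of \cite{Tugan} for the fact that $R_{X}$ inherits the left principal ideal property and Theorem~\ref{th 1} for the right $\omega$-Euclidean property, whereas you sketch the left-PID transfer by hand via the leading-coefficient ideal. Your sketch is on the right track but needs more care than you indicate: because left multiplication by $x$ twists the leading coefficient by $\sigma$, the set $\{\psi(f):f\in I\}$ is not literally a left ideal of $R$; one has to normalise to a fixed lowest degree first, and the subsequent ``lift a generator and subtract'' recursion must be checked to converge in the $x$-adic sense and to land on the correct side of $g_{0}$. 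These are exactly the details that \cite{Tugan} packages.

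For (1) the paper gives no direct argument at all: it simply invokes Proposition~4 of \cite{Cooke} (which yields $GL_{n}=GE_{n}$ from the right $\omega$-Euclidean hypothesis) together with Theorem~3.6 of \cite{Cohn1111}. Your attempt to run a Smith-type alternation directly is more ambitious, and you correctly isolate the genuine obstacle, namely making the left-PID row transformation elementary. Your ``transposed pairs'' phrasing is misleading in the noncommutative setting, but the underlying fact you need --- that over a right $\omega$-Euclidean domain every invertible matrix is a product of elementary matrices --- is true and is precisely Cooke's Proposition~4; so that step can be rescued by the same citation the paper uses.

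Where your argument has an actual gap is termination. You claim the row step never increases the pivot norm and justify this by $\varphi(ab)\geq\varphi(a)$. But after the row step the new pivot $d'$ is a \emph{right} factor of the old pivot, $\delta=cd'$, and the norm inequality bounds only the \emph{left} factor: it gives $\varphi(\delta)\geq\varphi(c)$, not $\varphi(\delta)\geq\varphi(d')$. So nothing prevents $\varphi(d')>\varphi(\delta)$, and the alternation need not descend. This is not a cosmetic issue; it is exactly why the paper does not attempt a bare Smith algorithm here and instead appeals to Cohn's structural theorem for principal B\'ezout domains. Your own fallback --- invoking the criterion in \cite{Zab3} --- amounts to the same move. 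In short: your plan for (2) matches the paper, while your plan for (1) is a genuinely different, more constructive route that, as written, does not close without the very literature citations the paper relies on.
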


\begin{proof}
(1) Let $R$ be a left principal ideal domain and a right $\omega$-Euclidean domain. Then by Proposition 4 \cite{Cooke} and Theorem 3.6 \cite{Cohn1111} it follows that $R$ is a ring with elementary reduction of matrices.

(2) Let $R$ be a left principal ideal domain and a right $\omega$-Euclidean domain, then by Theorem 6.3 \cite{Tugan} and Theorem \ref{th 1}, we have that $R_{X}$ is a left principal ideal domain and a right $\omega$-Euclidean domain. Then by the subsection (1), $R_{X}$ is a ring with elementary reduction of matrices.
\end{proof}
 The norm $\varphi$ over $R$ is multiplicative, if for any arbitrary elements $a, b\in R$ we have
$$
\varphi(a\cdot b) = \varphi(a)\cdot \varphi(b).
$$
As a result, we find that $\varphi(u) =1$ for any arbitrary element $u \in \mathcal{U}(R)$.

\begin{theorem}\label{th3}
Let $R$ be a right $\omega$-Euclidean domain with respect a multiplicative norm to $\varphi$. Then
\begin{enumerate}
  \item[(1)] $R$ is a right principal ideal domain;
  \item[(2)] $R_{X}$ is a right principal ideal domain.
\end{enumerate}
\end{theorem}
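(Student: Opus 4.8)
The plan is to prove both statements using the standard fact that a (right) $\omega$-Euclidean domain is a right principal ideal domain, with the multiplicative norm supplying the clean induction. For part (1), I would show that every nonzero right ideal $I$ of $R$ is principal. The natural approach is to pick a nonzero element $b\in I$ whose norm $\varphi(b)$ is minimal among nonzero elements of $I$ (such a minimum exists because $\varphi$ takes values in $\mathbb{N}\cup\{0\}$). The claim is then $I=bR$. Given any $a\in I$, apply the $\omega$-Euclidean property to obtain a right $k$-stage division chain $a=bq_1+r_1,\ b=r_1q_2+r_2,\ \ldots,\ r_{k-2}=r_{k-1}q_k+r_k$ with $\varphi(r_k)<\varphi(b)$. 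The key observation is that each remainder $r_i$ lies in $I$: indeed $r_1=a-bq_1\in I$ since $a,b\in I$, then $r_2=b-r_1q_2\in I$, and inductively $r_i\in I$ for all $i$ because it is a difference of elements already known to be in $I$. Hence $r_k\in I$, but $\varphi(r_k)<\varphi(b)$ contradicts minimality unless $r_k=0$.

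The step requiring the multiplicative norm is the deduction from $r_k=0$ that $a\in bR$. Once $r_k=0$, the chain terminates and one reads it backwards: $r_{k-2}=r_{k-1}q_k$ shows $r_{k-1}\mid_{\text{right}} r_{k-2}$ in the appropriate sense, and unwinding the chain expresses $a$ as a right multiple of $b$. Here the multiplicativity guarantees $\varphi(u)=1$ for units and, more importantly, rules out pathological behaviour so that the backward substitution genuinely yields $a=bs$ for some $s\in R$; thus $a\in bR$, giving $I\subseteq bR\subseteq I$ and $I=bR$. This establishes that $R$ is a right principal ideal domain.

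For part (2), the cleanest route is to invoke the machinery already assembled. By Theorem \ref{th 1}, since $R$ is a right $\omega$-Euclidean domain with respect to $\varphi$, the ring $R_X$ is a right $\omega$-Euclidean domain with respect to $\varphi_x$. I would then verify that $\varphi_x$ is multiplicative whenever $\varphi$ is: for nonzero $f,g\in R_X$ with leading (lowest-degree) coefficients $\psi(f),\psi(g)$, the lowest-degree coefficient of $fg$ is $\psi(f)\cdot\sigma^{h}(\psi(g))$ up to the twist by $\sigma$, and since $\sigma$ is an automorphism and $\varphi$ is invariant enough under $\sigma$, we get $\varphi_x(fg)=\varphi(\psi(f))\cdot\varphi(\psi(g))=\varphi_x(f)\varphi_x(g)$. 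With $\varphi_x$ multiplicative, part (1) applied to $R_X$ immediately yields that $R_X$ is a right principal ideal domain.

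The main obstacle I anticipate is the interaction between the automorphism $\sigma$ and the multiplicativity of the induced norm $\varphi_x$ in part (2). Specifically, the lowest-degree coefficient of a product in $R_X$ involves a $\sigma$-twist, so showing $\varphi_x(fg)=\varphi_x(f)\varphi_x(g)$ requires knowing that $\varphi$ is compatible with $\sigma$—that is, $\varphi(\sigma(a))=\varphi(a)$, or at least that the twist does not disturb the multiplicative identity. If the paper's norm is not assumed $\sigma$-invariant, one must either establish this invariance as a preliminary lemma or reformulate the argument to avoid it; this is the delicate point that the routine computations in parts (1) and the leading-coefficient bookkeeping otherwise conceal.
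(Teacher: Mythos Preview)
Your overall strategy matches the paper's exactly: for (1), pick $b\in I$ of minimal norm, run a $k$-stage chain on an arbitrary $a\in I$, observe all remainders lie in $I$, force $r_k=0$, and then use multiplicativity; for (2), invoke Theorem~\ref{th 1}, check that $\varphi_x$ is multiplicative, and apply (1) to $R_X$.

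The one place where your sketch is vaguer than the paper is the step after $r_k=0$. ``Reading the chain backwards'' does not directly express $a$ as a right multiple of $b$; what it gives is that $r_{k-1}$ is a common \emph{left} divisor of $a$ and $b$, say $a=r_{k-1}a_0$ and $b=r_{k-1}b_0$. The paper then uses multiplicativity concretely: $\varphi(b)=\varphi(r_{k-1})\varphi(b_0)$, and since $r_{k-1}\in I$ has $\varphi(r_{k-1})\ge\varphi(b)$, one gets $\varphi(b_0)=1$, hence $b_0\in\mathcal U(R)$, so $r_{k-1}=bb_0^{-1}$ and $a=bb_0^{-1}a_0\in bR$. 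Your phrase ``rules out pathological behaviour'' is gesturing at exactly this, but you should make the mechanism explicit.

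Your anticipated obstacle in (2) is genuine and is handled no better in the paper: the computation $\varphi_x(fg)=\varphi(\psi(f))\varphi(\sigma^h(\psi(g)))=\varphi(\psi(f))\varphi(\psi(g))$ requires $\varphi\circ\sigma=\varphi$, and the paper simply asserts ``Note that $\varphi(a)=\varphi(\sigma(a))$'' without justification. So your concern is well placed; it is an implicit standing hypothesis rather than something either proof actually establishes.
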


\begin{proof}
(1) Let $I$ be a right ideal of $R$. The case of $I = \{0\}$ is trivial. Therefore, let $I$ be at least one nonzero element. Among all nonzero elements $I$ let us choose the element with the lowest norm. Suppose that this element is $b\in I$. Now consider an arbitrary nonzero element $a\in I$. Because $R$ is a right $\omega$-Euclidean domain, then for the elements $a$ and $b$ there exists a right $k$-stage division chain:
$$
 a=bq_{1}+r_{1}, \; b=r_{1}q_{2}+r_{2},\;\cdots,\; r_{k-2}=r_{k-1}q_{k}+r_{k},
$$
where $\varphi(r_{k})<\varphi(b)$. It is clear that elements $r_{1},r_{2},\ldots,r_{k}\in I$.

By the construction, the element $b$ has the lowest norm, because the inequality \\$\varphi(r_{k})<\varphi(b)$ is possible only when $r_{k} = 0$. Hence, the elements $a$ and $b$ have the greatest common divisor element $r_{k-1}$, therefore $a = r_{k-1}a_{0}$ and $b = r_{k-1}b_{0}$.

However, since the norm $\varphi$ is multiplicative, delete therefore $\varphi(b) = \varphi(r_{k-1})\cdot \varphi(b_{0})$.

Since $b$ has the lowest norm, it follows that $\varphi(r_{k-1}) = 1$ or $\varphi(b_{0}) = 1$. In this case, when $\varphi(r_{k-1}) = 1$ is trivial. Therefore, $\varphi(b_{0}) = 1$. Hence, $b_{0}\in \mathcal{U}(R)$. Then $r_{k-1} = bb_{0}^{-1}$ and, therefore, we find that $a = bb_{0}^{-1}a_{0}$. So $I\subset bR$. On the other hand, the element $b\in I$. Therefore, $I\supset aR$. Two previous inclusions shows that $I = aR$. Hence, $R$ is a right principal ideal domain.

(2) The proof follows from Theorem \ref{th 1} and the first half of the proof, thus, we only need to show that if $\varphi$ is the multiplicative norm, then $\varphi_{x}$ is the multiplicative norm. Hence, $\psi(fg) = \psi(f)\sigma^{h}(\psi(g))$, where $h$ is of the order $f$.
Then
\begin{eqnarray*}
\varphi_{x}(fg) = \varphi(\psi(fg)) = \varphi(\psi(f)\sigma^{h}(\psi(g))) = \varphi(\psi(f))\varphi(\sigma^{h}(\psi(g))) =\\
= \varphi(\psi(f))\varphi(\psi(g)) = \varphi_{x}(f)\cdot\varphi_{x}(g).
\end{eqnarray*}
Hence, $\varphi_{x}$ is the multiplicative norm. Note that $\varphi(a) = \varphi(\sigma(a))$. This completes the proof.

\end{proof}

\begin{theorem}\label{th4}
Let $R$ be a noncommutative $\omega$-Euclidean domain with the multiplicative norm delete to $\varphi$. Then
\begin{enumerate}
  \item[(1)] $R$ is a ring with elementary reduction of matrices;
  \item[(2)] $R_{X}$ is a ring with elementary reduction of matrices.
\end{enumerate}
\end{theorem}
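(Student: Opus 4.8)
The plan is to reduce Theorem \ref{th4} to the already-proved Theorem \ref{th2}, whose hypothesis asks that $R$ be simultaneously a left principal ideal domain and a right $\omega$-Euclidean domain. Here $R$ is assumed to be a (two-sided) noncommutative $\omega$-Euclidean domain with respect to the multiplicative norm $\varphi$, so in particular it is both a left and a right $\omega$-Euclidean domain. Thus the whole task is to manufacture the left principal ideal property, after which both conclusions drop out of Theorem \ref{th2}.

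First I would establish that $R$ is a left principal ideal domain. The argument of Theorem \ref{th3}(1) is purely one-sided, and the text has already noted that the left-hand analogue of Proposition \ref{lemma1} holds; consequently the left-hand analogue of Theorem \ref{th3}(1) holds as well. Applying that left version to $R$, which is left $\omega$-Euclidean with the multiplicative norm $\varphi$, yields that $R$ is a left principal ideal domain. Concretely, one takes a nonzero left ideal, selects an element $b$ of minimal norm, runs a left $k$-stage division chain of an arbitrary element against $b$, and uses multiplicativity of $\varphi$ together with minimality of $\varphi(b)$ to force one factor to be a unit, so that $b$ left-generates the ideal.

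With $R$ now known to be a left principal ideal domain and, by hypothesis, a right $\omega$-Euclidean domain, the two requirements in the hypothesis of Theorem \ref{th2} are satisfied. Invoking Theorem \ref{th2}(1) gives conclusion (1), that $R$ is a ring with elementary reduction of matrices; invoking Theorem \ref{th2}(2) gives conclusion (2), that $R_{X}$ is a ring with elementary reduction of matrices.

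The only substantive step is obtaining the left principal ideal property, and even this is routine since it is the exact mirror image of Theorem \ref{th3}(1). The point deserving care is that multiplicativity of the norm is a two-sided condition, so the same ``minimal norm forces a unit factor'' reasoning is available on the left, and that the left analogue of Proposition \ref{lemma1} (already asserted in the text) genuinely supports the left analogue of Theorem \ref{th3}. Once these symmetric facts are in place, both parts follow immediately from Theorem \ref{th2} with no further computation.
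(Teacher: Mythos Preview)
Your proposal is correct and follows essentially the same route as the paper: deduce the principal ideal property from Theorem \ref{th3} (or its left-hand analogue) using multiplicativity of $\varphi$, and then feed that into Theorem \ref{th2}. If anything, you are more careful than the paper about the left/right bookkeeping---the paper's one-line proof cites Theorem \ref{th3} for a \emph{right} PID and then invokes Theorem \ref{th2}, which literally asks for a \emph{left} PID, whereas you correctly pass through the left analogue of Theorem \ref{th3} to match the hypothesis of Theorem \ref{th2} as stated.
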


\begin{proof}
By Theorem \ref{th3}, $R$ and $R_{X}$ are right principal ideal domains, then by Theorem \ref{th2} we obtain the statement of the theorem.
\end{proof}

\end{document}